\newtheorem{theorem}{Theorem}[section]
\newtheorem{corollary}[theorem]{Corollary}
\newtheorem{lemma}[theorem]{Lemma}
\theoremstyle{definition}
\newtheorem{definition}[theorem]{Definition}
\theoremstyle{remark}
\newtheorem{remark}[theorem]{Remark}
\newtheorem{example}[theorem]{Example}
\DeclareMathOperator{\Res}{Res}
\DeclareMathOperator{\Ind}{Ind}
\DeclareMathOperator{\Hom}{Hom}
\DeclareMathOperator{\Rep}{Rep}
\DeclareMathOperator{\Sym}{Sym}
\newcommand{\GL}{\mathrm{GL}}
\newcommand{\CC}{\mathbf C}
\newcommand{\sgn}{\mathrm{sgn}}
\newcommand{\tr}{\mathrm{trace}}
\DeclareMathOperator{\ch}{\mathrm{ch}}
\newcommand{\diag}{\mathrm{diag}}
\newcommand{\NN}{\mathbf N}
\newcommand{\xx}{\mathbf x}
\title{Polynomial Induction and the Restriction Problem}
\author[Narayanan]{Sridhar P. Narayanan}
\address{The Institute of Mathematical Sciences (HBNI), Chennai}
\email{sridharn@imsc.res.in}
\author[Paul]{Digjoy Paul}
\address{The Institute of Mathematical Sciences (HBNI), Chennai}
\email{digjoypaul@gmail.com}
\author[Prasad]{Amritanshu Prasad}
\address{The Institute of Mathematical Sciences (HBNI), Chennai}
\email{amri@imsc.res.in}
\author[Srivastava]{Shraddha Srivastava}
\address{Tata Institute of Fundamental Research, Mumbai}
\email{maths.shraddha@gmail.com}
\keywords{plethysm, restriction problem, polynomial induction, Frobenius reciprocity, multipartite partitions}
\subjclass[2010]{05E10, 05E05, 20C30}
\begin{document}
\maketitle
\begin{abstract}
  We construct the polynomial induction functor, which is the right adjoint to the restriction functor from the category of polynomial representations of a general linear group to the category of representations of its Weyl group.
  This construction leads to a representation-theoretic proof of Littlewood's plethystic formula for the multiplicity of an irreducible representation of the symmetric group in such a restriction.
  The unimodality of certain bipartite partition functions follows.
\end{abstract}

\section{Introduction}
\label{sec:intro}
A polynomial representation of $\GL_n(\CC)$ on a finite dimensional $\CC$-vector space $W$ is a homomorphism $\rho:\GL_n(\CC)\to\GL(W)$ such that, with respect to any basis of $W$, the matrix entries of $\rho(g)$ are polynomials in the matrix entries of $g\in \GL_n(\CC)$.
The character of such a representation is the symmetric polynomial:
\begin{displaymath}
  \ch\rho(t_1,\dotsc,t_n) = \tr(\rho(\diag(t_1,\dotsc,t_n));W).
\end{displaymath}
Here $\diag(t_1,\dotsc,t_n)$ denotes the diagonal matrix with entries $t_1,\dotsc,t_n$.

The polynomial representation $\rho$ is said to be homogeneous of degree $d$ if the matrix entries of $\rho(g)$ are all homogeneous of degree $d$.
Schur \cite{schurthesis} showed that every polynomial representation of $\GL_n(\CC)$ is isomorphic to a direct sum of irreducible homogeneous polynomial representations.
Moreover, the set of isomorphism classes of irreducible homogeneous polynomial representations of $\GL_n(\CC)$ of degree $d$ is indexed by $\Lambda(d,n)$, the set of partitions of $d$ with at most $n$ parts.
For $\lambda\in \Lambda(d,n)$ we denote by $(\rho_\lambda,W_\lambda)$ the irreducible polynomial representation of $\GL_n(\CC)$ of degree $d$ corresponding to $\lambda$.
Schur proved that $\ch \rho_\lambda(t_1,\dotsc,t_n)$ is the Schur polynomial $s_\lambda(t_1,\dotsc,t_n)$.
For details see \cite[Chapter~6]{rtcv}.

Frobenius \cite{frob} classified the irreducible complex representations of the symmetric groups $S_n$ via their characters.
For every partition $\mu$ of $n$, there exists a unique irreducible representation $V_\lambda$ of $S_n$, called the Specht module associated to $\lambda$.
As $\lambda$ runs over the set $\Lambda(n)$ of all partitions of $n$, the Specht modules $V_\lambda$ form a complete set of representatives of isomorphism classes of irreducible complex representations of $S_n$ (see \cite[Chapter~3]{rtcv}).

Regard the symmetric group $S_n$ as the subgroup of permutation matrices in $\GL_n(\CC)$.
Let $\Rep^d \GL_n(\CC)$ denote the category of homogeneous polynomial representations of $\GL_n(\CC)$ of degree $d$.
Let $\Rep S_n$ denote the category of complex representations of $S_n$.
Let $\Res^d: \Rep^d \GL_n(\CC)\to \Rep S_n$ denote the functor that takes a polynomial representation $\rho:\GL_n(\CC)\to \GL(W)$ to $\rho_{|S_n}:S_n\to \GL(W)$.
Let $r_{\lambda\mu}$, for $\lambda\in \Lambda(d,n)$ and $\mu \in \Lambda(n)$, be non-negative integers defined by
\begin{equation}
  \label{eq:restriction}
  \Res^d W_\lambda = \bigoplus_{\mu\in \Lambda(n)} V_\mu^{\oplus r_{\lambda\mu}}.
\end{equation}
The integers $r_{\lambda\mu}$ are known as \emph{restriction coefficients}.
The best-known way of computing restriction coefficients is by expanding a \emph{plethysm} of symmetric functions in the basis of Schur functions (Theorem~\ref{lit}), which we survey in greater detail in the next two sections.
Orellana and Zabrocki \cite[Theorem~1]{2016arXiv160506672O}, and independently, Assaf and Speyer \cite{assaf-speyer} obtained restrictions coefficients from the expansions of Schur functions into a new family of symmetric functions (the Specht symmetric functions).
A recent approach due to the authors of this paper involves character polynomials \cite{ASSD}.
However, none of these approaches gives a positive \emph{combinatorial interpretation} of the restriction coefficients.
This remains an important open question in algebraic combinatorics, known as the restriction problem.

Frobenius \cite{frob-ind} introduced the notion of an induced character proved the reciprocity theorem named after him.
The Frobenius reciprocity theorem can be interpreted as the adjunction between restriction and induction functors.
Frobenius's ideas were extended to locally compact topological groups and their unitary representations by Mackey~\cite{Mackey}.
The adaptation of Mackey's construction to the setting of polynomial representations is completely natural.
In this article, we construct a polynomial induction functor $\Ind^d:\Rep S_n\to \Rep^d \GL_n(\CC)$ which is a right adjoint to $\Res^d$ (Theorem~\ref{theorem:adj}).
We show that the character of $\Ind^d V_\mu$ is given by a plethysm (Theorem~\ref{theorem:indchar}).
This gives rise to a representation-theoretic proof of Littlewood's plethystic formula for restriction coefficients (Theorem~\ref{lit}).
\section{Plethysm and the Restriction Problem}
\label{sec:plethysm-restr-probl}
\subsection{Plethysm}
\label{sec:plethysm}
Composing a polynomial representation $\phi:GL_n \rightarrow GL_m$ with a polynomial representation $\psi:GL_m \rightarrow GL_r$ results in the polynomial representation $\psi \circ \phi:GL_n \rightarrow GL_r$. Let $f$ and $g$ be the characters of $\psi$ and $\phi$ respectively. The plethysm $f[g]$ is defined to be the character of $\psi \circ \phi$.
The concept of plethysm was introduced by Littlewood in \cite{MR0002127}.

A detailed discussion of the representation-theoretic significance of plethysm can be found in \cite[Section~A2.6]{MR1676282}.
Let $\NN$ denote the set of all non-negative integers.
For $\xx=(x_1,\dotsc,x_n)\in \NN^n$, let $t^\xx$ denote the monomial $t_1^{x_1}\dotsb t_n^{x_n}$.
Since $g$ is the character of the $m$-dimensional representation $\phi$,  $g$ can be written as a sum of $m$ monic monomials:
\begin{displaymath}
  g(t)=t^{\xx_1}+\dotsb+t^{\xx_m},\text{ for some }\xx_1,\dotsc,\xx_m\in \NN^n.
\end{displaymath}
For any symmetric polynomial $f$ in $m$ variables, the plethysm $f[g]$ can be defined as the substitution of these monic monomials of $g$ into the variables of $f$ (see \cite[Theorem 7]{MR2765321}):
\begin{displaymath}
  f[g](t) = f(t^{\xx_1},\dotsc,t^{\xx_m}).
\end{displaymath}
This definition also works when $f$ is a symmetric polynomial in infinitely many variables \cite[Section~7.1]{MR1676282} and $g$ is a sum of infinitely many monic monomials.
\begin{example}
  \label{eg:pl1}
  Let $f(t_1,\dotsc,t_4)$ be any symmetric polynomial and $g=(t_1+t_2)^2$.
  Since $g=t_1^2+t_2^2+t_1t_2+t_1t_2$, $f[g]=f(t_1^2,t_2^2,t_1t_2,t_1t_2)$.
\end{example}
\begin{example}
  \label{eg:pl2}
  For each non-negative integer $k$, let $p_k=\sum_i t_i^k$ be the $k$th power sum symmetric polynomial.
  The plethysm $p_k[p_l]$ is obtained by substituting the monomial $t_i^l$ of $p_l$ for the variable $t_i$ of $p_k$, so that $p_k[p_l]=p_{kl}$.
\end{example}
It is clear from this definition that given a symmetric function $g$, the map $R_g:f\mapsto f[g]$ is an algebra homomorphism. In other words, for symmetric functions $f_1$ and $f_2$, we have:
\begin{itemize}
\item $(f_1+f_2)[g]=f_1[g]+f_2[g]$,
\item $f_1f_2[g]=f_1[g]f_2[g]$.
\end{itemize}
Note that $R_{p_1}$ is the identity homomorphism. 

Finding a combinatorial interpretation of the coefficients $a_{\lambda \mu \nu}$ in the expression $s_\lambda[s_\mu]=\sum_{\nu} a_{\lambda \mu \nu}s_\nu$ remains an open problem in general. Littlewood \cite{MR0002127} obtained generating functions for $s_2[s_n],s_{1^2}[s_n],s_n[s_2], s_n[s_{1^2}]$, while Carr\'e and Leclerc \cite{MR1331743} offered combinatorial interpretations for $s_2[s_\lambda]$ and $s_{1^2}[s_\lambda]$. Other partial results can be found in \cite{CRY,CRY2,MR3685118,MR1661367,MR777698,MR2035305}.
For each non-negative integer $k$, let $h_k$ and $e_k$ denote the $k$th complete and elementary symmetric functions, respectively:
\begin{align}
  \label{eq:hk}
  h_k(t_1,t_2,\dotsc) & = \sum_{i_1\leq \dotsb \leq i_k} t_{i_1}\dotsb t_{i_k}\\
  \label{eq:ek}
  e_k(t_1,t_2,\dotsc) & = \sum_{i_1<\dotsb<i_k} t_{i_1}\dotsb t_{i_k}.
\end{align}
Consider the formal power series
\begin{displaymath}
  H(t) = H(t_1,\dotsc,t_n) = \sum_{k\geq 0} h_k(t_1,\dotsc,t_n).
\end{displaymath}
The plethysm of $H(t)$ into various symmetric functions plays an important role in this paper.
Observe that
\begin{displaymath}
  H(t) = \sum_{\xx\in \NN^n} t^\xx.
\end{displaymath}
\begin{definition}
  \label{definition:vecpar}
  Given $\xx\in \NN^n$, a vector partition of $\xx$ with $k$ parts is a decomposition
  \begin{displaymath}
    \xx = \xx_1+\dotsb+\xx_k,
  \end{displaymath}
  where $\xx_1,\dotsc,\xx_k\in \NN^n-\{\mathbf 0\}$.
  The order in which the summands appear is disregarded.
  The number of vector partition of $\xx$ with at most $k$ parts is denoted $p_k(\xx)$.
  The number of vector partition of $\xx$ with exactly $k$ or $k-1$ \emph{distinct} parts in $\NN^n-\{\mathbf 0\}$ is denoted $q_k(\xx)$.
  These functions can also be defined by the generating functions
  \begin{align*}
    \sum_{\xx\in \NN^n} \sum_{k\geq 0} p_k(\xx)t^\xx u^k & = \prod_{\xx\in\NN^n}\frac 1{1-t^\xx u},\\ 
    \sum_{\xx\in \NN^n} \sum_{k\geq 0} q_k(\xx)t^\xx u^k & = \prod_{\xx\in\NN^n}(1+t^\xx u).\\ 
  \end{align*}
\end{definition}
\begin{theorem}
  \label{theorem:ehH}
  For every non-negative integer $k$,
  \begin{align*}
    h_k[H] &=\sum_{\xx\in \NN^n} p_k(\xx)t^\xx,\\
    e_k[H] &=\sum_{\xx\in \NN^n} q_k(\xx)t^\xx.
  \end{align*}
\end{theorem}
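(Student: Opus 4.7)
The plan is to apply the substitution definition of plethysm directly to $H$, and then match the resulting coefficients with the generating functions defining $p_k(\xx)$ and $q_k(\xx)$ in Definition~\ref{definition:vecpar}.

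Since $H(t)=\sum_{\xx\in\NN^n}t^\xx$ is a sum of distinct monic monomials indexed by $\NN^n$, the substitution rule for plethysm (in its infinite-variable form) gives
\[
h_k[H]=h_k(t^\xx:\xx\in\NN^n), \qquad e_k[H]=e_k(t^\xx:\xx\in\NN^n).
\]
Combining this with the monomial expansions \eqref{eq:hk} and \eqref{eq:ek}, one rewrites the right-hand sides as sums over $k$-element multisets (respectively, $k$-element subsets) of $\NN^n$. The main task is then to identify, for each fixed $\xx$, the coefficient of $t^\xx$ with $p_k(\xx)$ and $q_k(\xx)$.

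For $h_k[H]$, I would stratify $k$-multisets of $\NN^n$ that sum to $\xx$ by the multiplicity $j$ of $\mathbf 0$: deleting the $j$ zero parts produces a vector partition of $\xx$ into exactly $k-j$ positive parts, and summing over $0\le j\le k$ recovers all vector partitions of $\xx$ with at most $k$ parts, i.e.\ $p_k(\xx)$. For $e_k[H]$, I would split $k$-subsets of $\NN^n$ summing to $\xx$ according to whether they contain $\mathbf 0$: those that do not contribute vector partitions with $k$ distinct positive parts, while those that do contribute vector partitions with $k-1$ distinct positive parts, totalling $q_k(\xx)$.

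No serious obstacle is expected; the only delicate point is the bookkeeping around $\mathbf 0\in\NN^n$, which is absorbed by the ``at most $k$'' and ``$k$ or $k-1$'' clauses in Definition~\ref{definition:vecpar}. As a consistency check, summing both identities against $u^k$ and invoking the standard generating functions $\sum_k h_k(y)u^k=\prod_j(1-y_ju)^{-1}$ and $\sum_k e_k(y)u^k=\prod_j(1+y_ju)$ reproduces the product formulas for $p_k(\xx)$ and $q_k(\xx)$ recorded there.
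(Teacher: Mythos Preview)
Your proposal is correct and follows essentially the same route as the paper: substitute the monomials $t^{\xx}$, $\xx\in\NN^n$, into $h_k$ and $e_k$, and identify the coefficient of $t^{\xx}$ with $p_k(\xx)$ and $q_k(\xx)$ respectively. The paper phrases the count via a total (lexicographic) order on $\NN^n$ and simply asserts the identification with $p_k(\xx)$ and $q_k(\xx)$; your explicit stratification by the multiplicity or presence of $\mathbf 0$ is exactly the bookkeeping that justifies that assertion, so your write-up is in fact a bit more complete on that point.
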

\begin{proof}
  Endow $\NN^n$ with the lexicographic order.
  The plethysm $h_k[H]$ is obtained by substituting the monomials $\{t^\xx\mid \xx\in\NN^n\}$ into the variables of $h_k$ in order.
  Thus by \eqref{eq:hk}, the coefficient of $t^\xx$ in $h_k[H]$ is the number of decompositions
  \begin{displaymath}
    \xx = \xx_1+\dotsb+\xx_k, \text{ where } \xx_1\leq \dotsb\leq \xx_k\in\NN^n,
  \end{displaymath}
  which is the same as $p_k(\xx)$.
  Similarly, by \eqref{eq:ek}, the coefficient of $t^\xx$ in $e_k[H]$ is the number of decompositions
  \begin{displaymath}
    \xx = \xx_1+\dotsb+\xx_k, \text{ where } \xx_1<\dotsb<\xx_k\in\NN^n,
  \end{displaymath}
  namely $q_k(\xx)$.
\end{proof}
\begin{remark}
  By expressing the generating functions of $p_k$ and $q_k$ as plethysms of Schur positive symmetric functions, Theorem~\ref{theorem:ehH} establishes their Schur positivity.
  For a wider discussion of Schur positivity see \cite{Peal}.
\end{remark}
\subsection{Plethystic Formula for Restriction Coefficients}
\label{sec:plethyst-form-restr}
Let $\theta_1,\ldots, \theta_n$ be the eigenvalues of the permutation matrix $w\in S_n\subset GL_n(\CC)$.
Then the character of $\Res^d W_\lambda$ at $w$ is $s_{\lambda}(\theta_1,\ldots, \theta_n)$.
Let $\langle\cdot,\cdot\rangle$ denote the Hall inner product on symmetric functions.
For a representation $(\rho, V)$ of $S_n$, let $\mathcal{F}(V)$ denote the Frobenius characteristic of the character of $V$ \cite[Section~7.18]{MR1676282}.
We have $\mathcal{F}(V_\lambda)=s_{\lambda}$ and $\dim \Hom_{S_n}(V_1,V_2)=\langle \mathcal{F}(V_1) , \mathcal{F}(V_2) \rangle$ for representations $V_1$ and $V_2$ of $S_n$.
By applying $\mathcal F$ to both sides of \ref{eq:restriction}, we get
\begin{displaymath}
  r_{\lambda \mu}=\langle \mathcal{F}(\Res^d W_\lambda), s_{\mu}\rangle.  
\end{displaymath}

The following plethystic formula is equivalent to a result of Littlewood \cite[Theorem~XI]{MR95209}:
\begin{theorem}\label{lit}
  For every $\lambda\in \Lambda(d,n)$ and $\mu \in \Lambda(n)$,
  $$r_{\lambda\mu}=\langle \mathcal{F} (\Res^d W_\lambda), s_{\mu}\rangle = \langle s_{\lambda}, s_{\mu}[H]\rangle.$$
\end{theorem}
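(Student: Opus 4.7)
The plan is to invoke the polynomial induction functor that the paper has just announced. By Theorem~\ref{theorem:adj}, $\Ind^d$ is a right adjoint to $\Res^d$, so Frobenius reciprocity gives
\[
r_{\lambda\mu} \;=\; \dim\Hom_{S_n}(\Res^d W_\lambda,V_\mu) \;=\; \dim\Hom_{\GL_n(\CC)}(W_\lambda,\Ind^d V_\mu).
\]
Since $W_\lambda$ is irreducible of degree $d$, the right-hand side equals the multiplicity of $W_\lambda$ in $\Ind^d V_\mu$, which in turn equals the coefficient of $s_\lambda(t_1,\ldots,t_n)$ in $\ch(\Ind^d V_\mu)$. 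By Theorem~\ref{theorem:indchar}, this character is the degree-$d$ component (in $n$ variables) of the plethysm $s_\mu[H]$; since $s_\lambda$ is itself of degree $d$, the Schur coefficient in this polynomial agrees with the coefficient of $s_\lambda$ in the full symmetric function $s_\mu[H]$, which by orthonormality of the Schur basis equals $\langle s_\lambda, s_\mu[H]\rangle$.

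The main obstacle is therefore displaced into Theorem~\ref{theorem:adj} (constructing $\Ind^d$ as a genuine right adjoint in the polynomial-representation setting) and Theorem~\ref{theorem:indchar} (identifying its character with the plethysm $s_\mu[H]$); once those are in hand, Littlewood's formula falls out by a one-line adjunction argument. Should one wish to bypass the induction functor altogether, a purely symmetric-function proof is still available: starting from $\mathcal F(\Res^d W_\lambda)=\sum_{\nu\vdash n}z_\nu^{-1}s_\lambda(\theta(w_\nu))\,p_\nu$, one observes that the characteristic polynomial of a permutation $w_\nu$ factors as $\det(I-tw_\nu)=\prod_k(1-t^{\nu_k})$, so that Cauchy's identity rewrites as $\sum_\lambda s_\lambda(t)\,s_\lambda(\theta(w_\nu))=p_\nu[H](t)$. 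Taking $\langle\cdot,s_\mu\rangle$ on both sides after expanding $s_\mu$ in the power-sum basis collapses to the claimed identity. The representation-theoretic point of the present paper, however, is precisely that the adjoint viewpoint \emph{explains} where the plethysm $s_\mu[H]$ comes from, rather than pulling it out of a Cauchy-identity manipulation.
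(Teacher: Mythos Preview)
Your proposal is correct and follows essentially the same route as the paper: apply the adjunction of Theorem~\ref{theorem:adj} to convert $r_{\lambda\mu}=\dim\Hom_{S_n}(\Res^d W_\lambda,V_\mu)$ into $\dim\Hom_{\GL_n(\CC)}(W_\lambda,\Ind^d V_\mu)=\langle s_\lambda,\ch\Ind^d V_\mu\rangle$, and then invoke Theorem~\ref{theorem:indchar} to identify $\ch\Ind^d V_\mu$ with $s_\mu[H]$. The additional symmetric-function sketch you offer via the Cauchy identity is a nice supplement but is not the paper's argument; the paper proves Theorem~\ref{lit} purely through the adjunction, exactly as in your first paragraph.
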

Stanley \cite[Exercise 7.74]{MR1676282} outlines a proof using symmetric function theory. For a proof using Hopf algebra techniques see Scharf and Thibon \cite[Corollary 5.3]{MR1272068}, who also explain the equivalence of this identity and Littlewood's branching rule for $GL_{n-1}(\CC)\supset S_n$ \cite[Theorem XI]{MR95209}.

Let $\lambda=(\lambda_1,\dotsc,\lambda_n)$ be a weak composition of $d$, and $\Sym^\lambda \CC^n = \otimes_{i=1}^n \Sym^{\lambda_i}\CC^n$.
It is known from the work of Orellana and Zabrocki \cite[Theorem~9]{2016arXiv160506672O} and Harman \cite[Proposition 3.11]{Nate} that the multiplicity $a_{\lambda\mu}$ of the Specht module $V_\mu$ in $\Res^d \Sym^\lambda \CC^n$ is counted by a certain class of multiset tableaux.
In \cite[Theorem 4.2]{NPS} it is shown that $s_{\mu}[H]$ is the generating function for $\{a_{\lambda \mu}\}$ as $\lambda$ varies. A combinatorial proof of Theorem~\ref{lit} is given in the second author's PhD thesis \cite{dpthesis} using this fact and some symmetric function identities.

In this paper, we show that $\ch \Ind^d V_\mu$ is $s_\mu[H]$ and thereby give a representation-theoretic proof of Theorem~\ref{lit}.

\section{Polynomial Induction}
\label{sec:polynomial-induction}

\subsection{Definition of the Induction Functor}
\label{sec:induction-functor}
Let $M_n$ denote the ring of $n\times n$ matrices with entries in $\CC$.
Let $P^d(M_n)$ denote the space of homogeneous polynomials of degree $d$ in the entries of matrices $Q\in M_n$.
Then, for any complex vector space $V$, $P^d(M_n)\otimes V$ can be regarded as the space of $V$-valued homogeneous polynomials of degree $d$ on $M_n$.
Define a functor $\Ind^d:\Rep S_n\to \Rep^d \GL_n(\CC)$ as follows: given an object $(\rho,V)$ of $\Rep^d S_n$ let
\begin{equation}
  \label{eq:indv}
  \Ind^d V = \{f\in P^d(M_n)\otimes V\mid f(wQ)=\rho(w)f(Q)\text{for all $w\in S_n$, $Q\in M_n$}\}.
\end{equation}
The vector space $\Ind^d V$ affords a representation of $\GL_n(\CC)$ via the action
\begin{equation}
  \label{eq:indrho}
  (\Ind^d \rho(g) f)(Q) = f(Qg)
\end{equation}
for all $f\in \Ind^d V$, $g\in \GL_n(\CC)$, and $Q\in M_n$.
Since matrix multiplication $g\mapsto Qg$ is linear in the entries of $g$ and $f$ is homogeneous of degree $d$, it follows that $\Ind^d(\rho, V):=(\Ind^d \rho, \Ind^d V)$ is an object of $\Rep^d \GL_n(\CC)$.
Given $\phi\in\Hom_{S_n}(V,W)$, define $\Ind^d \phi:\Ind^d V\to \Ind^d W$ by $(\Ind^d \phi f)(Q) = \phi(f(Q))$ for all $Q\in M_n$.
Then $\Ind^d:\Rep S_n\to \Rep^d \GL_n(\CC)$ is a functor.
\begin{example}
  Let $1_n$ and $\sgn_n$ denote the trivial and sign representations of $S_n$.
  From \eqref{eq:indv},
  \begin{align}
    \label{eq:triv}
    \Ind^d 1_n & = \{f\in P^d(M_n)\mid f(wQ)=f(Q) \text{ for all $w\in S_n$, $Q\in M_n$}\}\\
    \label{eq:sgn}
    \Ind^d \sgn_n & = \{f\in P^d(M_n)\mid f(wQ)=\sgn(w)f(Q) \text{ for all $w\in S_n$, $Q\in M_n$}\}.
  \end{align}
\end{example}
\subsection{Frobenius Reciprocity}
\label{sec:frob-recipr}
\begin{theorem}[Frobenius reciprocity]
  \label{theorem:adj}
  For each $d\geq 0$, the functor $\Ind^d:\Rep S_n\to \Rep^d \GL_n(\CC)$ is right adjoint to the functor $\Res^d$.
  In other words, for every object $(\sigma, U)\in \Rep^d \GL_n(\CC)$ and every object $(\rho,V)$ of $\Rep S_n$, there is a natural isomorphism
  \begin{equation}
    \label{eq:adj}
    \Hom_{\GL_n(\CC)}(U,\Ind^d V) \cong \Hom_{S_n}(\Res^d U, V).
  \end{equation}
\end{theorem}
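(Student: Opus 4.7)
The plan is to construct explicit inverse natural maps between the two Hom-spaces, in the style of the classical Frobenius reciprocity adjunction, with the unit/counit encoded by evaluation at the identity matrix $I\in M_n$.

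First I would exploit the polynomial nature of $\sigma:\GL_n(\CC)\to\GL(U)$ to extend it, entry-wise, to a homogeneous polynomial map $\tilde\sigma: M_n\to \End(U)$ of degree $d$. Because $\GL_n(\CC)$ is Zariski-dense in $M_n$, the homomorphism identity $\tilde\sigma(Q_1Q_2)=\tilde\sigma(Q_1)\tilde\sigma(Q_2)$ holds for all $Q_1,Q_2\in M_n$; this is the key algebraic fact that makes the construction work. I would then set up two maps:
\begin{align*}
\Phi:\Hom_{\GL_n(\CC)}(U,\Ind^d V) &\longrightarrow \Hom_{S_n}(\Res^d U, V), & \Phi(\phi)(u) &= \phi(u)(I),\\
\Psi:\Hom_{S_n}(\Res^d U, V) &\longrightarrow \Hom_{\GL_n(\CC)}(U,\Ind^d V), & \Psi(\psi)(u)(Q) &= \psi(\tilde\sigma(Q)u).
\end{align*}

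Next I would check that each is well-defined. For $\Phi$: given $w\in S_n$, the defining property of $\Ind^d V$ gives $\phi(u)(w)=\phi(u)(wI)=\rho(w)\phi(u)(I)$, while $\GL_n(\CC)$-equivariance of $\phi$ gives $\phi(\sigma(w)u)(I)=(\Ind^d\rho(w)\phi(u))(I)=\phi(u)(w)$, which together yield $\Phi(\phi)(\sigma(w)u)=\rho(w)\Phi(\phi)(u)$. For $\Psi$: the map $Q\mapsto \psi(\tilde\sigma(Q)u)$ lies in $P^d(M_n)\otimes V$ because $\tilde\sigma$ is polynomial of degree $d$; the $S_n$-equivariance $\Psi(\psi)(u)(wQ)=\psi(\tilde\sigma(w)\tilde\sigma(Q)u)=\rho(w)\psi(\tilde\sigma(Q)u)$ uses $\tilde\sigma(w)=\sigma(w)$ and the $S_n$-equivariance of $\psi$; finally $\Psi(\psi)(\sigma(g)u)(Q)=\psi(\tilde\sigma(Qg)u)=\Psi(\psi)(u)(Qg)$ shows $\GL_n(\CC)$-equivariance.

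It then remains to verify $\Phi\circ\Psi=\mathrm{id}$ and $\Psi\circ\Phi=\mathrm{id}$. The first is immediate from $\tilde\sigma(I)=\mathrm{id}_U$. For the second, starting from $\phi$, one has $\Psi(\Phi(\phi))(u)(g)=\phi(\sigma(g)u)(I)=\phi(u)(g)$ for every $g\in\GL_n(\CC)$; both sides are polynomials in the matrix entries of $Q$, so Zariski density of $\GL_n(\CC)$ in $M_n$ upgrades the equality to all $Q\in M_n$. Naturality in both arguments follows at once from the explicit formulas for $\Phi$ and $\Psi$.

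The main obstacle, and the point that distinguishes this adjunction from the classical Mackey construction, is the polynomial extension $\tilde\sigma$ and the density argument closing the loop $\Psi\circ\Phi=\mathrm{id}$; once those are in hand, everything else is a direct calculation with the defining relations \eqref{eq:indv} and \eqref{eq:indrho}.
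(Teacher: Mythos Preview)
Your proposal is correct and follows essentially the same route as the paper: define $\Phi$ by evaluation at the identity, define $\Psi$ via the polynomial extension of $\sigma$ to $M_n$, and check that they are mutually inverse $S_n$- and $\GL_n(\CC)$-equivariant maps. If anything, you are more explicit than the paper about the Zariski-density step needed to verify $\Psi\circ\Phi=\mathrm{id}$ on all of $M_n$ and about the multiplicativity of $\tilde\sigma$, which the paper leaves implicit.
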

\begin{proof}
  For each $f\in \Hom_{\GL_n(\CC)}(U,\Ind^d V)$ define $\Phi(f):U\to V$ by
  \begin{displaymath}
    \Phi(f)(u)=f(u)(1),
  \end{displaymath}
  where $1$ denotes the identity matrix in $\GL_n(\CC)$.
  Then
  \begin{align*}
    \Phi(f)(\sigma(w)u) & =f(\sigma(w)u)(1) & \text{[defn. of $\Phi$]} \\
                        & =\Ind^d\rho(w)f(u)(1) & \text{[$f$ is a homomorphism]}\\
                        & =f(u)(w) & \text{[defn. of $\Ind^d\rho$]}\\
                        & =\rho(w)f(u)(1) & \text{[since $f\in \Ind^d V$]}\\
                        & =\rho(w)\Phi(f)(u),
  \end{align*}
  so that $\Phi(f)\in \Hom_{S_n}(\Res^d U, V)$.
  Conversely, given $h\in\Hom_{S_n}(\Res^d U, V)$, define $\Psi(h):U\to P^d(M_n)\otimes V$ by
  \begin{displaymath}
    \Psi(h)(u)(x) = h(\sigma(x)u) \text{ for all $x\in M_n$}.
  \end{displaymath}
  Note that, a priori, $\sigma(x)$ is defined only for $x\in GL_n(\CC)$.
  However, since $\sigma$ is polynomial in the entries of $x$, it extends uniquely to $M_n$.
  Then
  \begin{align*}
    \Psi(h)(u)(wx) & = h(\sigma(wx)u) & \text{[defn. of $\Psi$]}\\
                   & = \rho(w)h(\sigma(x)u) & \text{[$h$ is a homomorphism]}\\
                   & = \rho(w)\Psi(h)(u)(x), & \text{[defn. of $\Psi$]}
  \end{align*}
  so that $\Psi(h)(u)\in \Ind^d V$.
  Moreover,
  \begin{align*}
    \Psi(h)(\sigma(g)u)(x) & = h(\sigma(x)\sigma(g)u) & \text{[defn. of $\Psi$]}\\
                           & = h(\sigma(xg)u) & \text{[$\sigma$ is a representation]}\\
                           & = \Psi(h)(u)(xg) & \text{[defn. of $\Psi$]}\\
                           & = \Ind^d\rho(g)\Psi(h)(u)(x). & \text{[defn. of $\Ind^d \rho$]}
  \end{align*}
  Thus $\Psi$ defines a homomorphism
  \begin{displaymath}
    \Hom_{S_n}(\Res^d U, V)\to \Hom_{\GL_n(\CC)}(U,\Ind^d V).  
  \end{displaymath}
  The mutual inverses $\Phi$ and $\Psi$ give the desired natural isomorphism.
\end{proof}
\subsection{Induction of Permutation Representations}
\label{sec:induct-perm-repr}
Let $X$ be a set with an $S_n$-action.
Let $\CC[X]$ denote the space of all complex-valued functions of $X$.
Then $\CC[X]$ affords a representation $\sigma_X:S_n\to GL(\CC[X])$ via:
\begin{displaymath}
  (\sigma_X(w)f)(x) = f(w^{-1}x).
\end{displaymath}
Representations of the form $\sigma_X$ are known as permutation representations of $S_n$.
The space $P^d(M_n)\otimes \CC[X]$ may be regarded as the space of functions $f:M_n\times X\to \CC$ such that $f(Q,x)$ is a homogeneous polynomial of degree $d$ in the entries of $Q$ for every $x\in X$.
In terms of this identification,
\begin{displaymath}
  \Ind^d \CC[X] = \{f\in P^d(M_n)\otimes \CC[X]\mid f(wQ,wx)=f(Q,x)\text{ for $w\in S_n$, $Q\in M_n$}\}.
\end{displaymath}
Suppose that $X$ is a transitive $S_n$-space.
Fix $x_0\in X$.
Then, for any $f\in \Ind^d \CC[X]$, $f(Q,x) = f(w^{-1}Q,x_0)$, where $w$ is an element of $S_n$ such that $x=wx_0$.
It follows that the map $r:\Ind^d\CC[X]\to P^d(M_n)$ defined by $r(f)(Q) = f(Q,x_0)$ is injective.
Its image in $P^d(M_n)$ is the subspace
\begin{displaymath}
  K^d_X = \{f\in P^d(M_n)\mid f(wQ)=f(Q) \text{ for all $w\in S_n$ with $wx_0=x_0$}\}.
\end{displaymath}
The vector space $K^d_X$ affords a representation of $\GL_n(\CC)$ via
\begin{displaymath}
  (\kappa^d_X(g)f)(Q)=f(Qg).
\end{displaymath}
We have proved the following result:
\begin{theorem}
  \label{theorem:perm-ind}
  The map $r:\Ind^d \CC[X]\to K^d_X$ defined by $r(f)(Q)=f(Q,x_0)$ gives rise to an isomorphism of polynomial representations $(\Ind^d\rho_X,\Ind^d\CC[X])\to (\kappa^d_X,K^d_X)$.
\end{theorem}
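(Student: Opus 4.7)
The plan is to exploit the fact that the preceding paragraph has already done most of the work: $r$ has been defined, shown to be injective, and its image has been identified as a subspace of $K^d_X$. What remains is to make explicit three verifications: that $r(f)$ genuinely lies in $K^d_X$ for every $f \in \Ind^d \CC[X]$, that $r$ is surjective onto $K^d_X$, and that $r$ intertwines the two $\GL_n(\CC)$-actions. The first point is immediate from the definitions: if $w \in S_n$ stabilizes $x_0$, then $r(f)(wQ) = f(wQ,x_0) = f(wQ,wx_0) = f(Q,x_0) = r(f)(Q)$, where the penultimate equality is the defining property of $\Ind^d \CC[X]$.

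For surjectivity, I would construct an explicit two-sided inverse. Given $\phi \in K^d_X$, define $\tilde f \in P^d(M_n) \otimes \CC[X]$ by $\tilde f(Q,x) = \phi(w^{-1}Q)$, where $w \in S_n$ is any element with $wx_0 = x$; such a $w$ exists because $X$ is transitive. The crux of the proof, and the only place where the stabilizer condition in the definition of $K^d_X$ is actually used, is to check that this prescription is independent of the choice of $w$: if $w_1 x_0 = w_2 x_0$, then $w_2^{-1}w_1$ fixes $x_0$, so applying the defining relation of $K^d_X$ to $\phi$ with $Q$ replaced by $w_1^{-1}Q$ yields $\phi(w_1^{-1}Q) = \phi((w_2^{-1}w_1)(w_1^{-1}Q)) = \phi(w_2^{-1}Q)$. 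Once well-definedness is in hand, the equivariance $\tilde f(w'Q, w'x) = \tilde f(Q,x)$ follows by writing $w'x = (w'w)x_0$, so $\tilde f \in \Ind^d \CC[X]$; and $r(\tilde f) = \phi$ holds by construction (taking $w = 1$).

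Finally, the intertwining property is a direct unwinding of definitions: for $g \in \GL_n(\CC)$, $f \in \Ind^d \CC[X]$, and $Q \in M_n$, equation \eqref{eq:indrho} gives $r(\Ind^d \rho_X(g) f)(Q) = (\Ind^d \rho_X(g) f)(Q,x_0) = f(Qg, x_0) = r(f)(Qg) = (\kappa^d_X(g) r(f))(Q)$. Combined with injectivity (noted in the preceding paragraph), the inverse just constructed, and this intertwining identity, $r$ is an isomorphism of polynomial representations. I expect no real obstacle beyond the well-definedness argument for the inverse, which is precisely what motivates the definition of $K^d_X$ in the first place; every other step is a bookkeeping check against the definitions.
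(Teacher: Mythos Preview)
Your proposal is correct and follows exactly the approach of the paper, which presents the argument in the paragraph preceding the theorem and then declares ``We have proved the following result.'' The paper is terser---it asserts that the image of $r$ is $K^d_X$ and leaves the intertwining check implicit---whereas you spell out the stabilizer-invariance verification, the explicit inverse, and the $\GL_n(\CC)$-equivariance; but these are precisely the details the paper's argument invites the reader to fill in, and there is no difference in strategy.
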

\section{Characters and Symmetric Functions}
\label{sec:char-symm-funct}
\subsection{Multipartite Partition Functions and Characters}
\label{sec:mult-part-funct}
In this section we compute the characters of some induced representations in terms of the functions $p_k(\xx)$ and $q_k(\xx)$ defined in Section~\ref{sec:plethysm}.
\begin{lemma}
  \label{lemma:ch-ind-triv-sgn}
  For every positive integer $n$,
  \begin{align}
    \label{eq:ch-ind-triv}
    \ch \Ind^d 1_n & = \sum_{\{\xx\in \NN^n : |\xx|=d\}}p_n(\xx)t^\xx\\
    \label{eq:ch-ind-sgn}
    \ch \Ind^d \sgn_n & = \sum_{\{\xx\in \NN^n : |\xx|=d\}}q_n(\xx)t^\xx.
  \end{align}
\end{lemma}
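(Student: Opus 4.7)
The plan is to compute both characters by introducing a weight basis of $P^d(M_n)$ and tracking the relevant actions on it explicitly. For a matrix $A=(a_{ij})\in\NN^{n\times n}$ with $|A|:=\sum_{i,j}a_{ij}=d$, set $m_A(Q)=\prod_{i,j}Q_{ij}^{a_{ij}}$; the $m_A$ form a basis of $P^d(M_n)$. A direct computation shows that for $g=\diag(t_1,\dotsc,t_n)$, the $\GL_n(\CC)$-action $(g\cdot m_A)(Q)=m_A(Qg)$ scales $m_A$ by the monomial $t^{\xx}$, where $\xx=(c_1,\dotsc,c_n)$ and $c_j=\sum_i a_{ij}$ is the $j$th column sum of $A$. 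Meanwhile the $S_n$-action $(w\cdot f)(Q)=f(wQ)$ sends $m_A$ to $m_{A'}$, where $A'$ is obtained from $A$ by permuting its rows via $w$; this commutes with the torus action, so the weight $t^\xx$ is constant on each $S_n$-orbit.

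For the trivial representation, \eqref{eq:triv} exhibits $\Ind^d 1_n$ as the $S_n$-invariant subspace of $P^d(M_n)$, which has as basis the orbit sums $\sum_{B\in\mathcal O}m_B$ indexed by $S_n$-orbits $\mathcal O$ of matrices. Each such orbit is determined by the multiset of the $n$ rows of $A$, regarded as vectors in $\NN^n$ (with the zero vector allowed). Deleting zero rows sets up a bijection between these multisets and vector partitions of $\xx$ into at most $n$ nonzero parts, counted by $p_n(\xx)$. Summing $t^\xx$ over orbits with $|\xx|=d$ yields \eqref{eq:ch-ind-triv}.

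For the sign representation, \eqref{eq:sgn} identifies $\Ind^d\sgn_n$ with the sign-isotypic component of $P^d(M_n)$. Decomposing $P^d(M_n)=\bigoplus_{\mathcal O}\CC[\mathcal O]$ into permutation representations on $S_n$-orbits, each summand is induced from the trivial representation of the stabilizer $H$ of any $A\in\mathcal O$; Frobenius reciprocity shows its sign-isotypic component is one-dimensional precisely when $H\subseteq A_n$, and zero otherwise. Since $H$ is the Young subgroup permuting equal rows of $A$, this is equivalent to the $n$ rows of $A$ being pairwise distinct. A basis of $\Ind^d\sgn_n$ is therefore indexed by $S_n$-orbits of such $A$, each carrying weight $t^\xx$. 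Splitting these orbits according to whether the zero vector appears as a row produces vector partitions of $\xx$ into exactly $n$ (if zero is absent) or exactly $n-1$ (if zero is present) distinct nonzero parts, giving $q_n(\xx)$ in total; this proves \eqref{eq:ch-ind-sgn}.

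The only mild obstacle is the bijective bookkeeping between ``$n$ rows of a matrix in $\NN^{n\times n}$ (zero allowed)'' and vector partitions (whose parts are nonzero by convention), together with the Frobenius-reciprocity step identifying which orbits survive in the sign isotypic component. Neither should pose a real difficulty once the monomial basis has been set up.
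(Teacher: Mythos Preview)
Your argument is correct and follows essentially the same route as the paper: the monomial basis $m_A$ indexed by $A\in M(d,n)$, the torus weight given by the sum of the row vectors of $A$ (equivalently the column sums), and a basis of $\Ind^d 1_n$ (resp.\ $\Ind^d\sgn_n$) indexed by $S_n$-orbits of matrices (resp.\ of matrices with pairwise distinct rows). The only cosmetic difference is that for the sign case the paper argues directly that $f_A=\sgn(w)f_{wA}$ forces $f_A=0$ when two rows coincide, whereas you reach the same conclusion via Frobenius reciprocity on each orbit; your more explicit handling of the zero-row bookkeeping in matching with $p_n(\xx)$ and $q_n(\xx)$ is a welcome clarification of a point the paper leaves implicit.
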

\begin{proof}
  Let $M(d,n)$ denote the set of all matrices with entries in $\NN$ that sum to $d$.
  For $A\in M(d,n)$ of the form $A=(a_{ij})$, let $q^A$ denote the monomial $\prod_{1\leq i,j\leq n} q_{ij}^{a_{ij}}$.
  Then $\{q^A\mid A\in M(d,n)\}$ is a basis of $P^d(M_n)$.
  A general element of $P^d(M_n)$ is of the form:
  \begin{equation}
    \label{eq:f}
    f = \sum_{A\in M(d,n)} f_A q^A, \quad f_A\in \CC.
  \end{equation}

  By \eqref{eq:triv}, an element $f\in \Ind^d 1_n$ if and only if $f_A=f_{wA}$ for each $w\in S_n$.
  Therefore $\Ind^d 1_n$ has a basis indexed by $S_n$-orbits in $M(d,n)$, where $S_n$ acts by permutation of rows.
  The basis element corresponding to the orbit of $A$ is an eigenvector for $\Ind^d 1_n(\diag(t_1,\dotsc,t_n))$ with eigenvalue $t^\xx$, where $\xx$ is the sum of the rows of $A$.
  Thus the basis elements of $\Ind^d 1_n$ that contribute to the monomial $t^\xx$ in $\ch \Ind^d 1_n$ are in bijection with vector partitions of $\xx$ with at most $n$ parts, giving \eqref{eq:ch-ind-triv}.

  By \eqref{eq:sgn}, an element $f$ of the from \eqref{eq:f} lies in $\Ind^d\sgn_n$ if and only if $f_A=\sgn(w)f_{wA}$ for all $w\in S_n$.
  Suppose that two rows of $A$ are equal.
  Let $w$ be the transposition in $S_n$ that interchanges these two rows.
  Then $wA=A$ and $\sgn(w)=-1$.
  We have $f_A=f_{wA}=-f_A$, so $f_A=0$.
  Therefore $\Ind^d \sgn_n$ has a basis indexed by $S_n$-orbits in $M(d,n)$ consisting of matrices with distinct rows, and \eqref{eq:ch-ind-sgn} follows.
\end{proof}
By padding with $0$'s if necessary, write $\lambda\in \Lambda(d,n)$ as $(\lambda_1,\dotsc,\lambda_n)$.
Let $\delta = (n-1,n-2,\dotsc,1,0)$.
For $w\in S_n$, let $w\cdot \delta = (n-w(1),\dotsc,n-w(n))$.
\begin{corollary}
  \label{cor:trvi-sign-mult}
  For every partition $\lambda\in \Lambda(d,n)$, the multiplicities of the trivial and sign representations of $S_n$ in $\Res^d W_\lambda$ are given by:
  \begin{align*}
    r_{\lambda,(n)} & = \sum_{w\in S_n} \sgn(w)p_n(\lambda+\delta-w\cdot \delta)\\
    r_{\lambda,(1^n)} & = \sum_{w\in S_n} \sgn(w) q_n(\lambda+\delta-w\cdot \delta).
  \end{align*}
  In the above expressions, for $\xx\in \mathbf Z^n-\NN^n$, it should be understood that $p_n(\xx)=0$.
\end{corollary}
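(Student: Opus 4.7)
The plan is to identify the two restriction coefficients as Schur-basis coefficients of the characters of $\Ind^d 1_n$ and $\Ind^d\sgn_n$ already computed in Lemma~\ref{lemma:ch-ind-triv-sgn}, and then to extract those coefficients via the Jacobi bialternant identity $s_\lambda = a_{\lambda+\delta}/a_\delta$.

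First I would invoke Frobenius reciprocity (Theorem~\ref{theorem:adj}) together with the semisimplicity of $\Rep^d \GL_n(\CC)$ and $\Rep S_n$ to conclude that the multiplicity of $V_\mu$ in $\Res^d W_\lambda$ agrees with the multiplicity of $W_\lambda$ in $\Ind^d V_\mu$. Specializing to $\mu=(n)$ and $\mu=(1^n)$, this says that $r_{\lambda,(n)}$ and $r_{\lambda,(1^n)}$ are precisely the coefficients of the Schur polynomial $s_\lambda(t_1,\dotsc,t_n)$ in the symmetric polynomials $\ch\Ind^d 1_n$ and $\ch\Ind^d\sgn_n$, respectively.

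Next I would extract those Schur coefficients from the monomial expansions supplied by Lemma~\ref{lemma:ch-ind-triv-sgn}. Writing $a_\delta = \det(t_i^{n-j}) = \sum_{w\in S_n}\sgn(w)\,t^{w\cdot\delta}$ and multiplying against $\ch\Ind^d 1_n = \sum_\xx p_n(\xx)\,t^\xx$, the coefficient of the monomial $t^{\lambda+\delta}$ on the one hand equals $\sum_{w\in S_n} \sgn(w)\,p_n(\lambda+\delta - w\cdot\delta)$. On the other hand, from $a_\delta\cdot\ch\Ind^d 1_n = \sum_\mu r_{\mu,(n)}\,a_{\mu+\delta}$ together with the observation that $\lambda+\delta$ is strictly decreasing (so $t^{\lambda+\delta}$ arises in $a_{\mu+\delta}$ only when $\mu=\lambda$ and only via the identity permutation), the same coefficient equals $r_{\lambda,(n)}$. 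Equating the two expressions gives the first formula, and the second formula is obtained identically from the $e_k[H]$ part of Lemma~\ref{lemma:ch-ind-triv-sgn} using $q_n$ in place of $p_n$. The convention $p_n(\xx)=q_n(\xx)=0$ for $\xx\in\mathbf Z^n-\NN^n$ is exactly what one needs to make the sum over all of $S_n$ meaningful, since $\lambda+\delta-w\cdot\delta$ typically has negative entries.

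There is no real technical obstacle here: the argument is a bookkeeping exercise combining Theorem~\ref{theorem:adj}, Lemma~\ref{lemma:ch-ind-triv-sgn}, and the bialternant formula. The only point that deserves care is that exactly one permutation (namely $w=\mathrm{id}$) contributes the monomial $t^{\lambda+\delta}$ to $a_{\lambda+\delta}$, which relies crucially on $\lambda$ being a partition so that $\lambda+\delta$ has pairwise distinct entries.
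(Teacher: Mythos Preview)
Your proposal is correct and follows essentially the same route as the paper: Frobenius reciprocity (Theorem~\ref{theorem:adj}) identifies $r_{\lambda,\mu}$ with the coefficient of $s_\lambda$ in $\ch\Ind^d V_\mu$, and then one multiplies by $a_\delta$ and reads off the coefficient of $t^{\lambda+\delta}$ using Lemma~\ref{lemma:ch-ind-triv-sgn}. The paper phrases the coefficient-extraction step as ``a well-known formula of Frobenius,'' which is exactly your bialternant argument.
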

The first identity is \cite[Theorem~4.5]{ASSD}.
\begin{proof}
  By Frobenius reciprocity (Theorem~\ref{theorem:adj}),
  \begin{displaymath}
    r_{\lambda\mu} = \langle s_\lambda, \ch \Ind^d V_\mu\rangle, 
  \end{displaymath}
  where $\langle\cdot,\cdot\rangle$ denotes the Hall inner product on symmetric polynomials.
  A well-known formula of Frobenius \cite[Theorem 5.4.10]{rtcv} says that this is the coefficient of $t^{\lambda+\delta}$ in $a_\delta\ch \Ind^d V_\mu$, where $a_\delta$ is the Vandermonde determinant $\det (t_i^{n-j})_{n\times n}=\sum_{w\in S_n} \sgn(w)t^{w\cdot \delta}$.
  By Lemma~\ref{lemma:ch-ind-triv-sgn},
  \begin{align*}
    r_{\lambda,(n)} & = \text{ coefficient of $t^{\lambda+\delta}$ in } \sum_{w\in S_n}\sgn(w)\sum_{|\xx|=d} p_n(\xx)t^{\xx+w\cdot \delta},\\
    r_{\lambda,(1^n)} & = \text{ coefficient of $t^{\lambda+\delta}$ in } \sum_{w\in S_n}\sgn(w)\sum_{|\xx|=d} q_n(\xx)t^{\xx+w\cdot \delta}.
  \end{align*}
  Since $\xx+w\cdot\delta=\lambda+\delta$ if and only if $\xx=\lambda+\delta-w\cdot\delta$, the formulae in Corollary~\ref{cor:trvi-sign-mult} follow.
\end{proof}
\begin{example}
  \label{example:2-part}
  Take $\lambda=(\lambda_1,\lambda_2)$ and any $n\geq 2$.
  Then we pad $\lambda$ with $n-2$ zeros and write it as $(\lambda_1,\lambda_2,0,\dotsc,0)$.
  We get
  \begin{displaymath}
    r_{\lambda,(n)} = \sum_{w\in S_n} \sgn(w)p_n(\lambda_1-1+w(1),\lambda_2-2+w(2),-3+w(3),\dotsc,-n+w(n)).
  \end{displaymath}
  If $-i+w(i)\geq 0$ for all $i> 2$, then we must have $w(i)=i$ for all $i>2$, and so the sum above reduces to
  \begin{displaymath}
    p_n(\lambda_1,\lambda_2,0,\dotsc,0)-p_n(\lambda_1+1,\lambda_2-1,0,\dotsc,0) = p_n(\lambda_1,\lambda_2)-p_n(\lambda_1+1,\lambda_2-1).
  \end{displaymath}
  Similarly,
  \begin{displaymath}
    r_{\lambda,(1^n)} = q_n(\lambda_1,\lambda_2)-q_n(\lambda_1+1,\lambda_2-1).
  \end{displaymath}
\end{example}
\begin{corollary}
  [Unimodality of bipartite partitions]
  For any integers $x_1\geq x_2\geq 1$, and any $n\geq 0$, we have:
  \begin{align*}
    p_n(x_1,x_2) & \geq p_n(x_1+1,x_2-1),\\
    q_n(x_1,x_2) & \geq q_n(x_1+1,x_2-1).
  \end{align*}
\end{corollary}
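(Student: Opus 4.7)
The core observation is that Example~\ref{example:2-part} already expresses each of the differences $p_n(x_1,x_2)-p_n(x_1+1,x_2-1)$ and $q_n(x_1,x_2)-q_n(x_1+1,x_2-1)$ as a restriction coefficient $r_{\lambda\mu}$. Since a restriction coefficient is by definition the multiplicity of an irreducible $S_n$-representation inside $\Res^d W_\lambda$ (see equation~\eqref{eq:restriction}), it is automatically a non-negative integer. The plan is therefore to set up the hypotheses of Example~\ref{example:2-part} correctly and then invoke this non-negativity.

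Concretely, for $n\geq 2$ I would set $\lambda=(x_1,x_2)$, which is a partition since $x_1\geq x_2$, and view it in $\Lambda(x_1+x_2,n)$ by padding with $n-2$ zeros. The hypothesis $x_2\geq 1$ guarantees that the perturbed tuple $(x_1+1,x_2-1)$ still lies in $\NN^2$, so both $p_n$ and $q_n$ are genuinely being evaluated. Example~\ref{example:2-part} then gives
\begin{align*}
  r_{\lambda,(n)}   & = p_n(x_1,x_2)-p_n(x_1+1,x_2-1),\\
  r_{\lambda,(1^n)} & = q_n(x_1,x_2)-q_n(x_1+1,x_2-1),
\end{align*}
and the two inequalities follow immediately from $r_{\lambda,(n)},\,r_{\lambda,(1^n)}\geq 0$.

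The remaining cases $n=0$ and $n=1$ I would dispatch directly from Definition~\ref{definition:vecpar}. For $n=0$, both $p_0$ and $q_0$ are supported only at $\mathbf 0$, and the hypothesis $x_2\geq 1$ makes both sides of each inequality equal to $0$. For $n=1$, every vector (zero or not) has a unique vector partition with at most one part, so $p_1$ and $q_1$ are identically $1$ and the inequalities degenerate to equalities.

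I expect no genuine obstacle here: the substance of the argument was already done in Example~\ref{example:2-part}, and the present corollary is simply its positivity consequence. The only thing to be careful about is the bookkeeping, namely verifying (as in that example) that for $n\geq 2$ and $\lambda$ padded by zeros, the only $w\in S_n$ contributing to the formula in Corollary~\ref{cor:trvi-sign-mult} are the identity and the transposition $(1\,2)$, every other permutation producing a negative coordinate in $\lambda+\delta-w\cdot\delta$ on which $p_n$ and $q_n$ vanish by convention. Since that verification is already spelled out in the preceding example, I would simply cite it rather than repeat it.
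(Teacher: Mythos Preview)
Your proposal is correct and follows essentially the same approach as the paper's own proof: identify each difference with the restriction coefficient computed in Example~\ref{example:2-part} and invoke the non-negativity of multiplicities from \eqref{eq:restriction}. Your explicit treatment of the degenerate cases $n=0$ and $n=1$ is a small bonus of care that the paper leaves implicit.
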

\begin{proof}
  As explained in Example~\ref{example:2-part}, the difference the left-hand side minus the right-hand side of each identity appears as the multiplicity of the trivial or sign representation in $\Res^{x_1+x_2}W_{(x_1,x_2)}$.
\end{proof}
\begin{remark}
  The first inequality is a result of Kim and Hahn \cite{Kim1997} (see \cite[Section~4.2]{ASSD} for a strengthening of this result).
  The second appears to be hitherto unpublished.
\end{remark}
\subsection{Proof of the Plethystic Formula}
\label{sec:proof-plethyst-form}
For a family $(\sigma,W)=\{(\sigma_d,W_d)\}_{d\geq 0}$, where $(\sigma_d,W_d)$ is an object of $\Rep^d GL_n(\CC)$, define its character to be the symmetric formal power series in the the variables $t_1,\dotsc,t_n$
\begin{displaymath}
  \ch \sigma(t_1,\dotsc,t_n)=\sum_{d\geq 0} \ch \sigma_d(t_1,\dotsc,t_n).
\end{displaymath}

For a representation $(\rho, V)$ of $S_n$, let $(\Ind \rho,\Ind V)$ denote the family \linebreak $\{(\Ind^d \rho, \Ind^d V)\}_{d\geq 0}$.
Then Lemma~\ref{lemma:ch-ind-triv-sgn} can be rewritten as:
\begin{align*}
  \ch \Ind 1_n &= \sum_{\xx\in \NN^n} p_n(\xx)t^\xx,\\
  \ch \Ind \sgn_n &= \sum_{\xx\in \NN^n} q_n(\xx)t^\xx.
\end{align*}

For each partition $\mu=(\mu_1,\dotsc,\mu_m)$ of $n$ let
\begin{displaymath}
  X_\mu = \textstyle{\{(S_1,\dotsc,S_m)\mid S_1\coprod \dotsb \coprod S_m = [n]\}},
\end{displaymath}
the collection of all ordered set partitions of $[n]$ into $m$ parts, of sizes $\mu_1,\dotsc,\mu_m$.
Then $X_\mu$ inherits an $S_n$-action from $[n]$.
\begin{lemma}
  \label{lemma:char-perm-ind}
  For every partition $\mu=(\mu_1,\dotsc,\mu_m)$ of $n$,
  \begin{displaymath}
    \ch \Ind \CC[X_\mu] = \prod_{i=1}^m \sum_{\xx\in \NN^n}p_{\mu_i}(\xx)t^\xx.
  \end{displaymath}
\end{lemma}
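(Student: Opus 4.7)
The plan is to combine Theorem~\ref{theorem:perm-ind} with the monomial-basis argument used in Lemma~\ref{lemma:ch-ind-triv-sgn}. First, I pick the base point
\[
x_0 = \bigl(\{1,\dotsc,\mu_1\},\ \{\mu_1+1,\dotsc,\mu_1+\mu_2\},\ \dotsc\bigr) \in X_\mu,
\]
whose stabilizer in $S_n$ is the standard Young subgroup $S_\mu := S_{\mu_1}\times\dotsb\times S_{\mu_m}$, acting on $M_n$ by permuting rows within the successive blocks of sizes $\mu_1,\dotsc,\mu_m$. Theorem~\ref{theorem:perm-ind} then identifies $\Ind^d \CC[X_\mu]$ with $K^d_{X_\mu}$, the space of those $f\in P^d(M_n)$ satisfying $f(wQ)=f(Q)$ for all $w\in S_\mu$, where $\GL_n(\CC)$ acts by $(g\cdot f)(Q)=f(Qg)$.

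Next, I expand $f=\sum_{A\in M(d,n)} f_A q^A$ in the monomial basis of $P^d(M_n)$. Exactly as in Lemma~\ref{lemma:ch-ind-triv-sgn}, the condition $f\in K^d_{X_\mu}$ translates to $f_A=f_{wA}$ for all $w\in S_\mu$, so $K^d_{X_\mu}$ has a basis indexed by $S_\mu$-orbits in $M(d,n)$. Because $S_\mu$ acts block-independently, each orbit factorizes as an $m$-tuple $(\mathcal{O}_1,\dotsc,\mathcal{O}_m)$, where $\mathcal{O}_i$ is an $S_{\mu_i}$-orbit on the $i$-th block of $\mu_i$ rows. Under $\diag(t_1,\dotsc,t_n)$ the monomial $q^A$ is an eigenvector of weight $t^\xx$, where $\xx$ is the column-sum of $A$; this weight factorizes as $t^{\xx^{(1)}}\dotsb t^{\xx^{(m)}}$ if $\xx^{(i)}$ denotes the column-sum of the $i$-th block.

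Finally, the $S_{\mu_i}$-orbits of $\mu_i\times n$ matrices over $\NN$ with column sum $\xx^{(i)}$ are the same data as multisets of $\mu_i$ vectors in $\NN^n$ summing to $\xx^{(i)}$; discarding the zero rows turns such a multiset into a vector partition of $\xx^{(i)}$ with at most $\mu_i$ nonzero parts, counted by $p_{\mu_i}(\xx^{(i)})$. Putting this together and summing over $d\geq 0$,
\[
\ch \Ind \CC[X_\mu] \;=\; \sum_{\xx\in\NN^n}\ \sum_{\xx=\xx^{(1)}+\dotsb+\xx^{(m)}}\ \prod_{i=1}^m p_{\mu_i}(\xx^{(i)})\,t^\xx \;=\; \prod_{i=1}^m \sum_{\xx\in\NN^n} p_{\mu_i}(\xx)\,t^\xx.
\]
There is no real obstacle: the only step that needs attention is the bijection between $S_{\mu_i}$-orbits and vector partitions, where one must keep in mind that allowing zero rows corresponds precisely to allowing fewer than $\mu_i$ nonzero parts, so that the \emph{unrestricted} count $p_{\mu_i}(\xx^{(i)})$ (and not some count with a fixed number of parts) is what appears.
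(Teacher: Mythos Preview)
Your proof is correct and follows essentially the same approach as the paper: choose the base point $x_0$ with stabilizer $S_\mu$, apply Theorem~\ref{theorem:perm-ind} to reduce to $S_\mu$-invariants in $P^d(M_n)$, and then run the monomial-basis argument of Lemma~\ref{lemma:ch-ind-triv-sgn} block by block. Your write-up is somewhat more explicit than the paper's (spelling out the factorization of $S_\mu$-orbits and the bijection with vector partitions having at most $\mu_i$ parts), but the underlying argument is the same.
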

\begin{proof}
  Let $x_0=(\{1,\dotsc,\mu_1\},\{\mu_1+1,\dotsc,\mu_1+\mu_2\},\dotsc,\{\mu_1+\dotsb+\mu_{m-1}+1,\dotsc,n\})$, an element of $X_\mu$.
  The stabilizer of $x_0$ in $S_n$ is the Young subgroup $S_\mu:=S_{\mu_1}\times \dotsb S_{\mu_m}$ of $S_n$.
  By Theorem~\ref{theorem:perm-ind},
  \begin{displaymath}
    \Ind^d \CC[X_\mu] = \{f\in P^d(M_n)\mid f(wQ)=f(Q) \text{ for all } w\in S_\mu,\; Q\in M_n\}.
  \end{displaymath}
  Proceeding as in the proof of Theorem~\eqref{eq:ch-ind-triv}, we see that a basis of $\Ind^d \CC[X_\mu]$ is indexed by $S_\mu$-orbits of matrices in $M(d,n)$, where $S_\mu$ acts by permuting blocks of rows of sizes $\mu_1,\mu_2,\dotsc,\mu_m$.
  The $i$th block of rows may be regarded as a vector partition with at most $\mu_i$-parts of some vector $\xx_i$.
  It contributes to $t^\xx$ in the total character when $\xx_1+\dotsb+\xx_m=\xx$.
\end{proof}
\begin{theorem}
  \label{theorem:indchar}
  For every representation $V$ of $S_n$,
  \begin{displaymath}
    \ch \Ind V = \mathcal F(V)[H].
  \end{displaymath}
\end{theorem}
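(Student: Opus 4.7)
The plan is to reduce to the case of permutation representations $\CC[X_\mu]$, for which Lemma~\ref{lemma:char-perm-ind} has already done the hard combinatorial work, and then match the two sides using the fact that $R_H:f\mapsto f[H]$ is an algebra homomorphism together with Theorem~\ref{theorem:ehH}.

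First I would observe that both sides of the claimed identity are additive in $V$. The functor $\Ind^d$, being a right adjoint (Theorem~\ref{theorem:adj}), preserves direct sums, so $\ch \Ind(V\oplus V') = \ch \Ind V + \ch \Ind V'$; and the Frobenius characteristic $\mathcal F$ is additive by construction. Both sides therefore extend uniquely to $\mathbf Z$-linear maps on the virtual representation ring $R(S_n)$.

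Next I would invoke the standard fact that the permutation characters $\CC[X_\mu]$, as $\mu$ ranges over partitions of $n$, form a $\mathbf Z$-basis of $R(S_n)$; equivalently, $\mathcal F(\CC[X_\mu]) = h_\mu := h_{\mu_1}\dotsb h_{\mu_m}$, and the $h_\mu$ form a $\mathbf Z$-basis of the degree-$n$ part of the ring of symmetric functions. By additivity it therefore suffices to verify the identity for $V=\CC[X_\mu]$.

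For this case, the right-hand side is $\mathcal F(\CC[X_\mu])[H] = h_\mu[H] = \prod_{i=1}^m h_{\mu_i}[H]$, where the last equality uses that plethysm by a fixed symmetric function is an algebra homomorphism (noted just after Example~\ref{eg:pl2}). By Theorem~\ref{theorem:ehH}, each factor equals $\sum_{\xx\in\NN^n} p_{\mu_i}(\xx)t^\xx$, so the right-hand side becomes $\prod_{i=1}^m \sum_{\xx\in\NN^n} p_{\mu_i}(\xx)t^\xx$, which is exactly the formula given for $\ch \Ind \CC[X_\mu]$ in Lemma~\ref{lemma:char-perm-ind}.

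The step I expect to be the most delicate is the reduction to permutation representations: I need the extension of $\Ind$ to virtual representations to be legitimate, which rests on $\Ind^d$ being additive (clear from its definition \eqref{eq:indv} or from being a right adjoint), and on citing the fact that $\{\CC[X_\mu]\}$ spans $R(S_n)$ over $\mathbf Z$. Once those two ingredients are in place, the actual matching of the two formulas is essentially a one-line computation combining Theorem~\ref{theorem:ehH} with the multiplicativity of plethysm.
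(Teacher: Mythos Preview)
Your proposal is correct and follows essentially the same route as the paper: reduce by additivity to the permutation modules $\CC[X_\mu]$, then combine Lemma~\ref{lemma:char-perm-ind} with Theorem~\ref{theorem:ehH} and the multiplicativity of $f\mapsto f[H]$ to match both sides with $h_\mu[H]$. The only cosmetic difference is that you invoke the stronger fact that the $\CC[X_\mu]$ form a $\mathbf Z$-basis of $R(S_n)$, whereas the paper only uses that their characters span the class functions; either suffices.
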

\begin{proof}
  Since $\Ind^d$ is an additive functor and $f\mapsto f[H]$ is also additive, it suffices to prove the identity in the theorem for any family of representations whose characters span the space of class functions on $S_n$.
  In particular, it suffices to prove the identity for the family $\CC[X_\mu]$ of permutation representations, as $\mu$ runs over all partitions of $n$.
  We have
  \begin{align*}
    \ch \Ind \CC[X_\mu] & = \prod_i \sum_{\xx\in \NN^d} p_{\mu_i}(\xx) & [\text{Lemma~\ref{lemma:char-perm-ind}}\\
                        & = \prod_i h_{\mu_i}[H] & [\text{Theorem~\ref{theorem:ehH}}]\\
                        & = h_\mu[H]\\
                        & = \mathcal F(\CC[X_\mu])[H],
  \end{align*}
  as required.
\end{proof}
We are now in a position to give a representation-theoretic proof of Littlewood's plethystic formula for restriction coefficients (Theorem~\ref{lit}):
\begin{align*}
  \langle \mathcal F(\Res^d W_\lambda), s_\mu\rangle & = \dim \Hom_{S_n}(\Res^d W_\lambda, V_\mu)\\
                                                     & = \dim \Hom_{\GL(n)}(W_\lambda,\Ind^d V_\mu) & \text{[Theorem~\ref{theorem:adj}]}\\
                                                     & = \langle s_\lambda, \ch \Ind^d V_\mu\rangle\\
                                                     & = \langle s_\lambda, s_\mu[H]\rangle, & \text{[Theorem~\ref{theorem:indchar}]}
\end{align*}
as required.

\end{document}